\tikzstyle{block} = [draw, fill=blue!10, rectangle, 
\tikzstyle{sum} = [draw, fill=blue!20, circle, node distance=1cm]
\tikzstyle{input} = [coordinate]
\tikzstyle{output} = [coordinate]
\tikzstyle{pinstyle} = [pin edge={to-,thin,black}]
\theoremstyle{plain}
\newtheorem{theorem}{Theorem}
\newtheorem{assumption}{Assumption}
\newtheorem{corollary}{Corollary}
\newtheorem{lemma}{Lemma}
\theoremstyle{definition}
\newtheorem{example}{Example}
\theoremstyle{remark}
\newtheorem{remark}{Remark}
\newcommand{\iid}{i.\@i.\@d.\@ }
\newcommand{\eg}{\emph{e.g.}, }
\newcommand{\ie}{\emph{i.e.}, }
\newcommand{\ud}{\mathrm{d}}
\providecommand{\abs}[1]{\left\lvert#1\right\rvert}
\providecommand{\eps}{\varepsilon}
\newcommand{\E}{{\mathbb E}}
\renewcommand{\P}{{\mathbb P}}
\newcommand{\neww}[1]{}
\renewcommand{\hat}{\widehat}
\renewcommand{\leq}{\leqslant}
\renewcommand{\geq}{\geqslant}
\begin{document}

 \title{Signalling and obfuscation for congestion control}

  \author{
 Jakub Mare{\v c}ek,
     Robert Shorten,
     Jia Yuan Yu\thanks{Email: \{ jakub.marecek, robshort, jiayuanyu \}@ie.ibm.com}  \\ 
 IBM Research -- Ireland\\ Technology Campus Damastown, Dublin 15,
     Ireland 
}

\maketitle

\begin{abstract}
  We aim to reduce the social cost of congestion in many smart city
  applications.  In our model of congestion, agents interact over
  limited resources after receiving signals from a central agent that
  observes the state of congestion in real time.  Under natural models
  of agent populations, we develop new signalling schemes and show that
  by introducing a non-trivial amount of uncertainty in the signals,
  we reduce the social cost of congestion, i.e., improve social
  welfare.  The signalling schemes are efficient in terms of both
  communication and computation, and are consistent with past
  observations of the congestion.  Moreover, the resulting population
  dynamics converge under reasonable assumptions.
\end{abstract}

\section{Introduction}
The study of ``Smarter Cities'' and ``Smarter Planet'' provides a host
of new and challenging control engineering problems. Many of these
problems can be cast in a congestion control framework, where a
large number of agents such as people, cars, or consumers
 compete for a limited resource.
Examples of such problems include consumers competing
for electricity supply; road users competing for space in the roads;
users of bike (cars) sharing systems competing for access to bikes (cars);
and pedestrians competing for access. 
Within each such problem, there are many variants. Pedestrians could be,
for example, workers arriving at work through a common gate, 
people leaving a stadium, waiting to check in at an airport,
or in an emergency. 
Each variant may have additional constraints, but certain key features remain the same.

Addressing these problems is hugely challenging, even without considering the issues of scale and limited communication.
Resource utilisation should be minimised while delivering a certain quality of service to individual agents.
Congestion is often caused by bursty arrivals, rather than the capacity and quality-of-service constraints \emph{per se}.
Prediction systems to alleviate congestion (informing customers of parking spaces, for example) create
 complicated feedback systems that are difficult to model and control.
This latter issue of prediction and optimisation under feedback
 clearly opens a wide area of research,
with links to reinforcement learning, adversarial game theory,
and closed-loop identification at scale.

An example of congestion due to synchronised demand is the well-known
flapping effect in road networks, parking lots, and bike-sharing
stations.
When presented with alternative choices of resources,
agents who choose greedily based on past observations cause a
congestion to oscillate between the resources over time.
The flapping effect occurs due to actuation delays and the feedback effect between
agents' observations and actions: the agents' choices are based on
\emph{past} observations, but affect the \emph{future} state of congestion.
Our principal motivation is to develop methods to break up this effect
by taking into account these feedback issues and actuation delays. The starting point for our work
is a simple model of agent-induced congestion.

Our objective in this paper is to take a first step toward addressing some of these problems. 
We wish to develop easy-to-implement algorithms, i.e. not requiring much inter-agent communication or dedicated infrastructure, 
 with the objective of ``de-synchronising'' agents' actions by providing them with signals, whereby achieving a temporal load balancing over networks.
Load-balancing ideas in this direction have been recently suggested in
a variety of applications
\cite{ScholteKing2014,ScholteChen2014,6545321}.
However, the work presented here goes far beyond what has been proposed in several ways.
First, it suggests non-trivial signalling schemes, as opposed to the simple randomisation and differential pricing ideas.
Second, it takes into account heterogenous agent behaviour and actions.
Furthermore, our  algorithms are provably scalable and can be analysed in a simple stochastic setting, 
whereby yielding provable behaviour even in the case of large communication and actuation delays.

Specifically, we model a congestion problem as a multiagent system evolving over time, where
the agents follow natural policies. 
Roughly speaking, we show that by varying the amount of uncertainty agents face, one can
reduce congestion in a controlled manner, and that a desirable state is arrived at asymptotically.
Our investigation proceeds as follows. First, we fix the congestion
cost functions, as well as the policies of each agent in a
heterogeneous population.  Then, we vary the parameters controlling
the distribution of the random signals sent to the agents and evaluate
the resulting social cost. We repeat this investigation for two
signalling schemes, and show that the social cost is optimised
by introducing uncertainty.  Although
our results are not difficult to derive, they are -- to the best of our
knowledge -- both novel and
useful in a smart city context.

The paper is organised as follows.  After describing the
model of congestion in Section~\ref{sec:mod1}, we
set our work in
context with the existing works in Section~\ref{sec:rel}.
Next, we present in two sections our main results for two models of signalling
and agent-response.
Section~\ref{sec:mod2} considers a scalar signalling scheme and agents with 
different actuation delays. 
Section~\ref{sec:mod3} considers a broadcast interval signalling scheme and agents with
different risk aversion. 
In each of these two sections, we show through theoretical and
empirical analysis that withholding a non-trivial
amount of information from the agents
reduce the overall social cost.
Finally, we present open
questions in Section~\ref{sec:con}.

\section{Model}
\label{sec:mod1}

We consider a model of congestion, where a population of $N$
agents $\{1,\ldots,N\}$ is confronted with two alternative choices at discrete time
steps. Note this situation is widespread and is generalised to the case if multiple choices (more than two)
in a straightforward manner\protect\footnote{Our results can be
generalised to the case of arbitrarily many choices. The main change
consists of
replacing binomial probability distributions by multi-nomial ones.}. Our approach is
to model congestion using probabilistic techniques. In this context
alternative agent actions are denoted by $\{A,B\}$, and time
steps are denoted by $t=1,2,\ldots$.  Then, the random variable
$a_t^i$ denotes the choice of agent $i$ at time $t$ and $n^A_t = \sum_i
1_{[a_t^i = A]}$ be the number of agents choosing action $A$ at time
$t$.  Throughout the paper, we assume that each agent has to pick an
action at every time $t$, i.e., the number of agents choosing action $B$
is $n^B_t = N-n^A_t$.

\subsection{Costs} 
We also assume that each action has a cost. For example, this could be total trip time or fuel consume.
The total cost of action $A$ at time $t$ is a function of the
number $n^A_t$ of agents that pick $A$ at time $t$.
Let
$c_A: \mathbb R_+ \to \mathbb R_+$ denote the so-called cost function
for action $A$.  If $n^A_t$ agents choose action $A$ at time $t$, the
cost to each of these agents is $c_A(n^A_t)$.
We define $c_B$ similarly to $c_A$.  Figure~\ref{fig:ushaped5settings_funs} gives an example of these
cost functions.
\begin{figure}[t!]
  \begin{subfigure}{0.5\textwidth}
    \includegraphics[clip=true,width=0.99
    \textwidth]{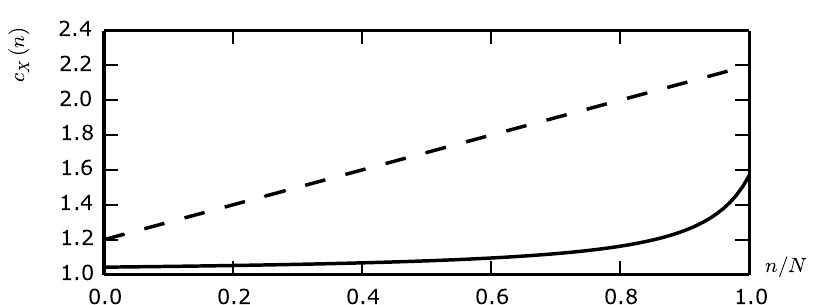}
    \caption{Cost functions: $c_A(n) \triangleq 1.2 + n/N$ (dashed line)
      and $c_B(n) \triangleq 1 + (1.08-n/N)^{-1}/22$ (solid line).\\ \vspace{0.87em}}
    \label{fig:ushaped5settings_funs}
  \end{subfigure}
  \begin{subfigure}{0.5\textwidth}
    \includegraphics[clip=true,width=0.99
    \textwidth]{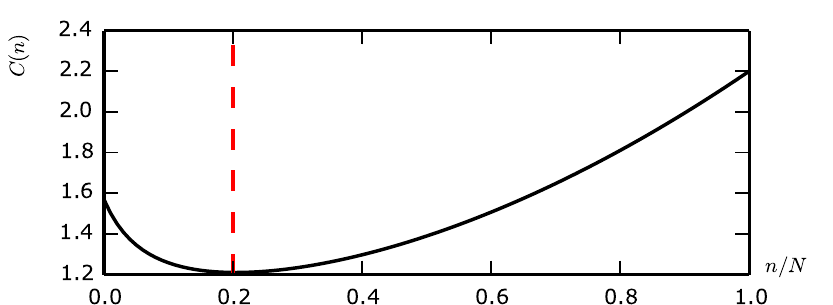}
    \caption{One-step social cost as a function of the fraction of
      agents choosing action $A$.  The dashed vertical line highlights the
      minimum at a population profile of $n^*/N = 0.2$.}
    \label{fig:ushaped5settings_costs}
  \end{subfigure}
  \caption{The setting of the running example. Despite the
    fact that the curves do not intersect, the optimum of the social cost
    is achieved when both actions are used -- with a $0.2$-fraction of
    agents choosing action $A$.}
  \label{fig:ushaped5settings}
\end{figure}
The {\em social cost}  $C(n^A_t)$ scales the costs of the two actions at
time $t$ with the proportions of agents taking the two actions, i.e.,
\begin{align}
  \label{eqn:socialCost}
  C(n^A_t) \triangleq \frac{n^A_t}{N} \cdot c_A(n^A_t) +
  \frac{n^B_t}{N} \cdot c_B(n^B_t).
\end{align}
The social cost corresponding to the cost functions of
Figure~\ref{fig:ushaped5settings_funs} is shown in
Figure~\ref{fig:ushaped5settings_costs}. We also define the
{\em time-averaged social cost} as follows:
\begin{align}
  \label{eqn:meanSocialCost}
  \hat C_T \triangleq \frac{1}{T} \sum_{t=1}^{T} C(n^A_t),
\end{align}
which will be used to illustrate the evolution of the social cost in
simulations.
The {\em social optimum} $n^*$ of the social cost $C$ is defined in the usual way:
\begin{align}
  n^* = \min_{n \in \{0,1, \ldots, N\}} \frac{n}{N} \cdot c_A(n) + 
  \frac{N - n}{N} \cdot c_B(N - n).
  \label{eq:socialopt}
\end{align}
Given this basic setting, we wish to control these various costs
by influencing individual agents' decisions. This is done using
simple signalling schemes. We consider two specific schemes.
\begin{itemize}
\item The first one de-synchronises 
greedy agents with different actuation delays, i.e., delays between 
receiving a signal and taking a corresponding action. 
\item The second one 
de-synchronises agents with various levels of risk aversion.
\end{itemize}
We will later show that under reasonable assumptions on the behaviour 
of agents, these signalling schemes lead to desirable outcomes.

\section{Related Work}\label{sec:rel}


Before proceeding, we now briefly mention some related work. Note that
the related work spans many fields 
and an exhaustive survey is neither possible nor intended here.

Congestion problems are important 
in networking, control,
  operations research, and game theory 
  \cite{sheffi1985urban,ibaraki1988resource,stefanov2001separable,patriksson2008survey}. Our
  work proposes signalling solutions that reduce congestion by introducing a non-trivial amount of
  uncertainty through randomisation.
Although it may seem surprising given the simplicity of the model,
our results are unknown in the literature to the best of our
knowledge. 

In operations research and mathematical optimisation, congestion problems
are often considered to be deterministic nonlinear resource
allocation problems (\eg 
\cite{sheffi1985urban,ibaraki1988resource,stefanov2001separable,patriksson2008survey}).
In our work, however, the decision-maker only chooses the signals to send, instead of
an allocating the resources, \emph{per se}.

Game theoretical models assume agents that act
strategically, i.e., each agent has a non-negligible effect on the
outcome.  The study of
one-shot congestion games has a long history, starting with \cite{Rosenthal1973}.
%
%
The socially optimal and Nash equilibrium outcomes of congestion games
have been compared in
\cite{RouTar02,price-of-stability}, and \cite{price-of-uncertainty}. 
These
  works show that when agents have full information, a resulting
  equilibrium outcome can incur much higher total congestion than a
  socially optimal outcome.
An important distinction of our work is that, instead of a
  static game theoretical model, we consider a dynamic model evolving
  over time. The agents do not observe other agents' actions, nor do
  they act strategically: the only information they have about the
  congestion of the resources is obtained from signals broadcast by a
  central agent.  
We model a heterogeneous population of agents, i.e.,
 a large number of agents with a variety of 
\emph{fixed} policies. 
These agents do not act strategically in the fashion of price-taking 
participants in large markets, which reflects situations where each agent has very limited 
impact on the outcome as a whole.
We take the view of the central
  agent, who has perfect knowledge of the congestion across resources
  up to the previous time step, but imperfect knowledge of the
  stochastic composition of the population of users. Our goal is to
  minimise the social welfare or total congestion: we study 
how the amount of information withheld can contribute to better social 
outcomes.
In contrast to evolutionary game theory, where agents of different types interact and 
the population profile (or distribution) evolves over time, the
population profile remains fixed in our setting, and only the action 
profile evolve over time.

Our random signals are reminiscent of perturbation
schemes in repeated games, such as Follow-the-Perturbed-Leader
\cite{Hannan57}, trembling-hand equilibrium \cite{Selten1975},
stochastic fictitious play \cite{Harsanyi1973},
or the power of two choices \cite{963420}.
When the agents use such randomised algorithms in their decision-making, the 
resulting demand process 
has been shown to be behave rather well in theory (e.g., \cite{963420}), as well as in 
a number of applications,
e.g., 
in parking \cite{ScholteKing2014},
bike sharing \cite{ScholteChen2014},
and 
charging electric vehicles \cite{6545321}.
However, we propose a signalling and guidance scheme that combines
randomization and intervals.

In the transportation literature, \cite{smith1979} introduces the notion
of equilibrium as the limit of the congestion distribution if it
exists.  \cite{horowitz1984} considers a number of notions of noisy
signals and studies greedy policies and equilibria.  Our approach is
also related to signalling of parking space availability \cite{SKCS13}.


Our interval signalling scheme is reminiscent of the equilibrium
outcome of \cite{CraSob82} in the context of signalling games in
economics (cf. \cite{Sobel2013} for an up-to-date survey).  However,
we consider the problem of optimal signalling in a dynamic system,
whereas \cite{CraSob82} considers Nash equilibrium signalling in
single-shot games.
For signalling games, \cite{Green2007} shows that more information does
not generally improve the equilibrium welfare of agents.  Their
notion of ``information,'' due to \cite{Blackwell51}, is however very
different from ours.

Finally, let us note that there is also a related draft \cite{MarecekInterval} by the present
authors, which explores the notion of $r$-extreme interval signalling 
and optimisation over truthful interval signals.





\section{Scalar Signalling}
\label{sec:mod2}

In this section, we model agents with different actuation delays -- \ie delays between 
receiving a signal and taking a corresponding action -- and
introduce a signalling scheme with the aim of desynchronising these agents.

\begin{figure}
\centering
\includegraphics[width=0.99\textwidth,clip=true,trim=8.5cm 8cm 5.5cm 4.5cm]{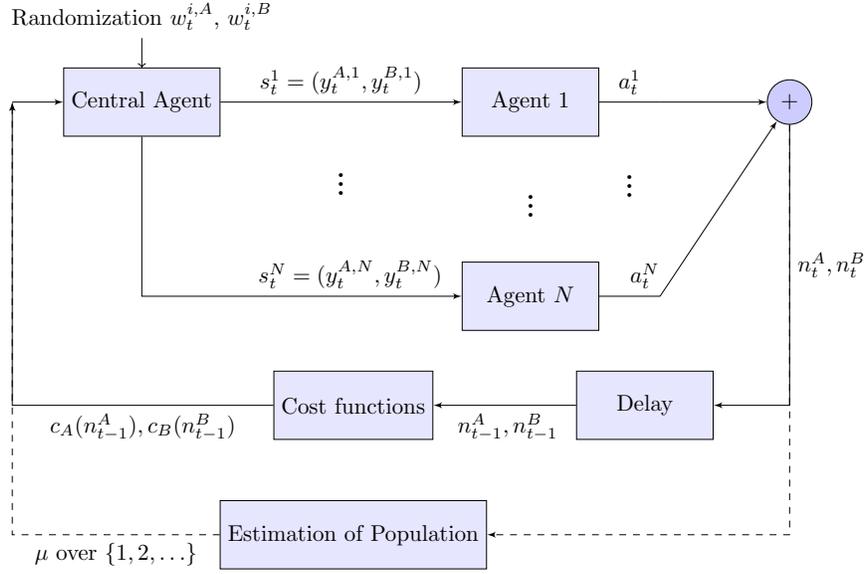}
  \caption{Block diagram for scalar signalling.}
  \label{fig:diag1}
\end{figure}

Notationwise, let $H_t$ denote the congestion costs at time $t$:
\begin{align*}
  H_t \triangleq (c_A(n^A_t) ,
  c_B(n^B_t)) \in \mathbb R^2.
\end{align*}
We write $s^i_t \triangleq s^i_t(H_{t-1})$ denotes the signal that the central agent sends to 
agent $i$ at every time $t$.
For a fixed integer $d$, a \emph{signalling scheme} is
a sequence of mappings
\begin{align}\label{eq:81}
  \{s^i_t : \mathbb R^2 \to \mathbb R^d \mid i = 1,\ldots,N,
  t=1,2,\ldots \}.
\end{align}

\subsection{$\sigma$-Scalar Signalling} 

We propose the following scalar signalling scheme, which corresponds
  to the case $d=2$ in \eqref{eq:81}, with a
parameter $\sigma > 0$. 
At every time step $t$, the central agent sends to each agent $i$ a
distinct signal $s^i_t := (y^{A,i}_t, y^{B,i}_t) \in \mathbb R^2$:
\begin{align}
  y^{A,i}_t &\triangleq c_A(n^A_{t-1}) + w^{i,A}_t, \\
  y^{B,i}_t &\triangleq c_B(n^B_{t-1}) + w^{i,B}_t,
\end{align}
where $\{(w^{i,A}_t,w^{i,B}_t) : i=1,\ldots,N; \;
t=1,2,\ldots\}$ are independent and identically distributed (\iid) zero-mean normal random variables such that
$\E (w^{i,A}_t - w^{i,B}_t)^2 = \sigma^2$ for all $i$ and $t$. 

\begin{remark}[Random Signals]
  The signals $\{s^i_t\}$ in this section and the next are generated by
  the random variables -- $\{w^{i,A}_t,w^{i,B}_t\}$ in this section and $\{\nu_t,\eta_t\}$ in
  the next.  In the rest of the paper,
  all expectations and probabilities are with respect to the
  distribution of these random signals.
\end{remark}

In contrast to signalling schemes that solve
an allocation problem to minimise the social cost and coerce each
agent towards its assigned
action, $\sigma$-scalar signalling 
is \emph{scalar-truthful} in the following sense:
\begin{align*}
  \E y^{A,i}_t &= c_A(n^A_{t-1}), \quad\mbox{and}\quad \E y^{B,i}_t = c_A(n^B_{t-1}),
\end{align*}
for all $i$ and $t$,
where the expectation is over the random variables $\{w^{i,X}_t\}$.
We will show in Section~\ref{sec:sim} that it is possible to reduce the social cost
by setting the parameter $\sigma$ to a non-trivial value.

\subsection{Agent Population and Policies} 
In response to the history of
signals received prior to time $t$, every agent $i$ takes action
$a_t^i$.  We assume that every agent acts
based only on the signals, without considering the response of other
agents to its own action.  This is a reasonable assumption for three reasons.  
First, it is hard for the agent to obtain more information
than the signal sent by the central agent.  Second, the agents know
that the signals received are truthful.  
Finally, the environment may be rapidly changing, 
making independent decisions difficult. 

Formally, let $S^i_t$ denote the history of signals received by agent
$i$ up to time $t$: $ S^i_t \triangleq \{s^i_1, \ldots, s^i_{t-1}\}$.
Let $\mathcal S_t$ denote the set of possible realisations of signal
histories 
up to time $t$.  
A sequence of mappings from signal history to action, $\mathcal
S_t \to \{A,B\}$ for $t=1,2,\ldots$, is called a policy.  

We let $\Omega$ denote the set
of all possible types of agents. 
We introduce a probability measure
$\mu$ over the set $\Omega$, which describes the distribution of the
population of agents into types.  We assume that each type $\omega
\in \Omega$ is associated with a policy, and every agent of type
$\omega$ follows the policy $\pi^\omega: \mathcal S_t \to \{A,B\}$.
For a population of $N$ agents,
we assume that $\mu$ is such that
  $\mu(O) N$ is a non-negative integer for all $O\subseteq \Omega$.
Consequently, $\mu(\omega) N$ denotes the number of agents
with policy $\pi^{\omega}$.  The following assumption simplifies the
analysis.
\begin{assumption}\label{as:pop}
  The true distribution $\mu$ is known to the central agent.
\end{assumption}
It is also reasonable that the central agent is capable of estimating
the distribution of agents $\mu$ using statistical estimation
techniques.  Estimating distribution of agents is possible by Bayesian
or maximum a-posteriori methods.  If the estimation is not accurate,
the central agent can always fall back to sending no signal, which
creates no more congestion than there already is.

\subsubsection{$\pi^k$-policies} In the case of scalar signalling, we assume that
the set of possible types is $\Omega = \{1,2,\ldots\}$.  
Each agent of type $k\in \Omega$ has an actuation delay of $k$ time
steps, i.e., it acts at time $t$
upon the signal $s^i_{t-k}$ received at the time step $t-k$ -- if the
latter is defined.
This effectively models the delay between when an agent makes the
decision which action to take, and when it contributes to congestion.
These delays do not come from communication, but from the
agent-specific delay between making a decision and causing congestion.
For example, a driver may decide to take route $A$ at time $t-k$, but
begin the journey only at time $t$, resulting in the delay between
deciding on a route and reaching a particular congested road segment.
Another example is the computational delay between receiving stock
market information and deciding to invest in a stock.  

More precisely, in response to
the scalar signalling scheme with parameter $\sigma$, i.e., $s^i_t :=
(y^{A,i}_t, y^{B,i}_t)$, each agent $i$ of type $k$
acts according to the following policy $\pi^k$:
\begin{align}
  a_t^i &= \pi^k(S^i_{t}) = \pi^k(s^i_{t-k}) \nonumber\\
  &= 
  \begin{cases}
    \arg\min_{X=A,B}  \quad y^{X,i}_{t-k}, &\quad\mbox{if }t \geq k+1,\\
    A, &\quad\mbox{otherwise},
  \end{cases}\label{eq:d1}
\end{align}
i.e., the agent chooses greedily the action with the smallest cost signal at
time $t-k$.

\subsection{Guarantees}\label{sec:sim}

In this section, we characterise the social cost resulting from
$\sigma$-scalar signalling and a population of agents following
policies $\{\pi^k : k=1,2,\ldots\}$.  Furthermore, we show that the
expected distribution of agents between the actions converges for
every initial condition.

For clarity of exposition, we first consider a homogeneous population.
We analyse the expected value and concentration property of the social
cost, and show the convergence of each agent's action profile.
Our analysis methods can be used to obtain corresponding guarantees
for the heterogeneous population cases.
We also illustrate our main theoretical guarantees with simulations.

\subsubsection{Homogeneous Population} 
In this section, we consider a homogeneous population
of agents with the policy $\pi^1$, i.e., the case $\mu(1) = 1$.  In other words, we assume that every
agent $i$ acts according to $\pi_1$.

First, we focus on the next-step outcome and establish closed-form
expressions for the expectation of the number of agents taking action
$A$.

\begin{lemma}[Conditional Distribution of $n^A_t$]\label{le:1}
  Suppose that $n^A_{t-1}$ takes a fixed value $n\in\{1,\ldots,N\}$
  and that all agents are of policy $\pi^1$. Consider an arbitrary
  time step $t>1$ and arbitrary $n^A_{t-1}$.  Let $p_{\sigma,n}
  \triangleq \Phi_\sigma(c_B(N-n) - c_A(n))$.  Then, we have
  \begin{align*}
    \P(n^A_t = m \mid n^A_{t-1} = n) = \binom{N}{m} p_{\sigma,n}^m
    (1-p_{\sigma,n})^{N-m}.
  \end{align*}
\end{lemma}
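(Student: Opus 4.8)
The plan is to show that, conditionally on the congestion level $n^A_{t-1}=n$, the agents' next actions are independent Bernoulli trials with a common success probability, so that their sum $n^A_t$ is binomial. I would carry this out in three steps: reduce each agent's greedy decision to a single threshold event, compute the probability of that event, and then use the independence of the noise across agents to assemble the binomial law.

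First I would fix $t>1$ and condition on the event $\{n^A_{t-1}=n\}$, so that $n^B_{t-1}=N-n$ and the deterministic parts $c_A(n)$, $c_B(N-n)$ of the cost signals are frozen. Under the policy $\pi^1$ of \eqref{eq:d1}, agent $i$ selects $A$ exactly when its signalled cost for $A$ falls below that for $B$, i.e. when
\begin{align*}
  c_A(n) + w^{i,A} < c_B(N-n) + w^{i,B},
\end{align*}
which rearranges to $w^{i,A}-w^{i,B} < c_B(N-n)-c_A(n)$. As ties have probability zero under continuous noise, this threshold comparison determines $a^i_t$ almost surely.

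Second, I would identify the law of each indicator. Writing $Z_i \triangleq w^{i,A}-w^{i,B}$, the stated hypotheses (joint normality and $\E(w^{i,A}-w^{i,B})^2=\sigma^2$) make $Z_i$ a zero-mean Gaussian with variance $\sigma^2$, so
\begin{align*}
  \P(a^i_t = A \mid n^A_{t-1}=n) = \P\bigl(Z_i < c_B(N-n)-c_A(n)\bigr) = \Phi_\sigma\bigl(c_B(N-n)-c_A(n)\bigr) = p_{\sigma,n},
\end{align*}
where $\Phi_\sigma$ is the $\mathcal N(0,\sigma^2)$ distribution function — precisely the claimed parameter.

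Finally, since $n^A_t = \sum_{i=1}^N 1_{[a^i_t = A]}$ and the noise pairs $\{(w^{i,A},w^{i,B})\}_i$ are \iid across $i$, the indicators $1_{[a^i_t=A]}$ are, conditionally on $\{n^A_{t-1}=n\}$, independent $\mathrm{Bernoulli}(p_{\sigma,n})$ variables, and summing $N$ of them yields the binomial pmf in the statement. The only step requiring real care — and the main obstacle — is this conditional independence: I would emphasise that conditioning on $n^A_{t-1}=n$ eliminates the sole coupling between agents (the shared, congestion-dependent means $c_A(n)$ and $c_B(N-n)$), leaving each decision driven purely by its own independent noise pair, so that the trials genuinely decouple.
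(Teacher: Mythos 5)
Your proposal is correct and follows essentially the same route as the paper's own proof: reduce each agent's greedy choice under $\pi^1$ to the threshold event $w^{i,A}-w^{i,B} < c_B(N-n)-c_A(n)$, note that the difference is $\mathcal N(0,\sigma^2)$ so each indicator is Bernoulli with parameter $p_{\sigma,n}$, and use independence of the noise across agents to conclude that $n^A_t$ is binomial. Your explicit remark that conditioning on $n^A_{t-1}=n$ is what decouples the agents (and that ties occur with probability zero) is a small clarification the paper leaves implicit, but the argument is the same.
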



\begin{proof}
  Recall that $y^{A,i}_1 := c_A(n^A_t) + w_i$, and $y^{B,i}_1 :=
  c_B(N-n^A_t) + w_i$.  Notice that $a_t^i$ is a random variable. The
  number of agents taking action $A$ at time $t$ is:
  \begin{align*}
    n^A_{t} = & \sum_{i = 1}^{N} 1_{ [a^i_t = A] }.
  \end{align*}
  Clearly, for a fixed $t$, $\{w^A_{i,t} - w^B_{i,t}:i=1,\ldots,N\}$
  is \iid with normal distribution $N(0,\sigma^2)$. Hence, for a fixed
  $t$, $\{ 1_{[a^i_t = A]} : i=1,\ldots,N \}$ are \iid Bernoulli
  random variables, each with parameter $\P(a^i_t = A)$.

  Let $\Phi_\sigma$ denote the tail probability function of the
  distribution $N(0,\sigma^2)$.  Observe that
  \begin{align*}
    \P(a^i_t = A) &= \P( y^{A,i}_t  < y^{B,i}_t )\\
    &= \P(c_A(n^A_{t-1}) + w^A_{i,t} < c_B(N-n^A_{t-1}) + w^B_{i,t})\\
    &= \P(w^A_{i,t} - w^B_{i,t} < c_B(N-n^A_{t-1}) - c_A(n^A_{t-1}) )\\
    &= \Phi_\sigma(c_B(N-n) - c_A(n)) = p_{\sigma,n},
  \end{align*}
  where we used the assumption that $n^A_{t-1} = n$.

  Since $n^A_t$ is a sum of Bernoulli random variables, the claim
  follows by the binomial probability mass function.
\end{proof}

Next, we can derive a closed-form expression for the expectation of
the next-step social cost.

\begin{theorem}[Expected Next-step Social Cost]\label{pro:1}
  Let us take the assumptions as in Lemma~\ref{le:1}.  
For every parameter $\sigma > 0$, and
  allocation $n\in \{1,\ldots,N\}$ to action $A$, we have
  \begin{align*}
    \frac{\E C(n^A_t)}{C(n)} = \sum_{m = -\infty}^{\infty}
    \frac{C(m)}{C(n)} \binom{N}{m} p_{\sigma,n}^m
    (1-p_{\sigma,n})^{N-m},
  \end{align*}
  where the expectation is conditioned on $n^A_{t-1} =
  n$.
\end{theorem}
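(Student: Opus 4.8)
The plan is to evaluate the conditional expectation directly by appealing to the closed-form conditional distribution already established in Lemma~\ref{le:1}. First I would invoke the law of the unconscious statistician: conditioned on $n^A_{t-1}=n$, the random variable $n^A_t$ is supported on $\{0,1,\ldots,N\}$, so its expected social cost is the probability-weighted average $\E[C(n^A_t)\mid n^A_{t-1}=n]=\sum_{m=0}^{N}C(m)\,\P(n^A_t=m\mid n^A_{t-1}=n)$.

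Next I would substitute the binomial mass function from Lemma~\ref{le:1}, replacing $\P(n^A_t=m\mid n^A_{t-1}=n)$ by $\binom{N}{m}p_{\sigma,n}^m(1-p_{\sigma,n})^{N-m}$. To reconcile this finite sum with the bi-infinite sum appearing in the statement, I would note that $\binom{N}{m}=0$ for every integer $m\notin\{0,\ldots,N\}$; hence extending the summation index over all of $\Z$ merely appends vanishing terms and leaves the value unchanged. Dividing both sides by $C(n)$ then yields exactly the stated identity.

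I do not anticipate any genuine obstacle here: the result is an immediate corollary of Lemma~\ref{le:1} together with the elementary formula for the expectation of a function of a discrete random variable. The only two points deserving a word of care are, first, that the normalisation by $C(n)$ presumes $C(n)>0$, which holds because the cost functions take values in $\R_+$ and at least one action carries positive cost for $n\in\{1,\ldots,N\}$; and second, that the bi-infinite sum in the statement is purely a notational device for the finite binomial sum rather than a bona fide infinite series.
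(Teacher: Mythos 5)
Your proposal is correct and follows essentially the same route as the paper's own proof, which likewise expands $\E C(n^A_t)$ as $\sum_m C(m)\,\P(n^A_t = m)$ and substitutes the binomial mass function from Lemma~\ref{le:1}. Your additional remarks -- that the bi-infinite sum is a notational convention (since $\binom{N}{m}=0$ outside $\{0,\ldots,N\}$) and that dividing by $C(n)$ requires $C(n)>0$ -- are careful touches the paper leaves implicit.
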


\begin{proof}
  Observe that
  \begin{align}
    \E C(n^A_t)
    &= \sum_{m = -\infty}^{\infty} C(m) \P(n^A_t = m)\nonumber\\
    &= \sum_{m = -\infty}^{\infty} C(m) \binom{N}{m} p_{\sigma,n^*}^m
    (1-p_{\sigma,n^*})^{N-m}.\label{eq:767}
  \end{align}
  The claim follows.
\end{proof}

\begin{figure}[t]
  \begin{subfigure}[t]{0.5\textwidth}
    \includegraphics[width=0.95\textwidth]{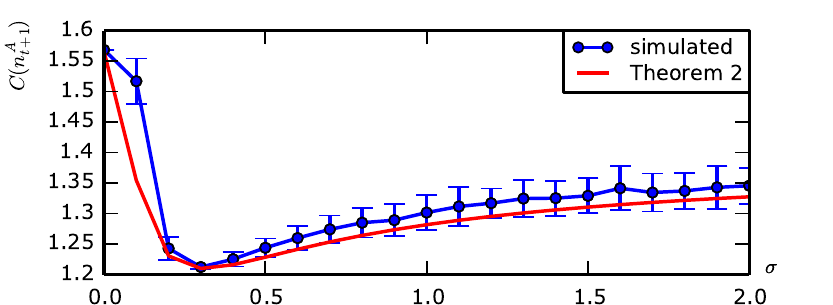}
    \caption{Next-step social cost $C(n^A_t)$ versus parameter
      $\sigma$ with $n^A_{t-1} = n^*$.  Observe that the social cost is minimum at
      $\sigma = 0.3$ and that the variance decreases as we approach
      the minimum.  }
    \label{fig:e}
  \end{subfigure}
  \hskip 3mm
  \begin{subfigure}[t]{0.5\textwidth}
    \includegraphics[width=0.95\textwidth]{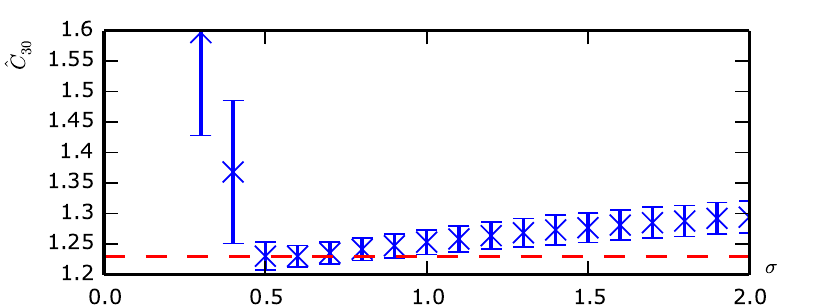}
    \caption{Time-averaged social cost $\hat C_{30}$ versus parameter
      $\sigma$ with $n^A_1 = n^*$.  The dashed horizontal line is at
      the unique optimum.}
    \label{fig:ushaped5longrun}
  \end{subfigure}
  \caption{$\sigma$-scalar signalling, homogeneous population of
    $N=40$, and the cost functions of
    Figure~\ref{fig:ushaped5settings_funs}.
}
\end{figure}


Recall that $n^*$ denotes the socially optimal allocation of agents to
action $A$.
Figure~\ref{fig:e}\footnote{In Figure~\ref{fig:e} and throughout the paper, all
  error-bar plots are averaged over 100 simulations with error bars
  corresponding to one standard deviation.} illustrates
the expected social cost of Theorem~\ref{pro:1}
as a function of the parameter $\sigma$,
along with the simulated mean and standard deviation of $C(n^A_t)$.
In particular, observe that by setting $\sigma = 0.3$, the next-step
social cost $C(n^A_t)$ achieves the
theoretical minimum social
cost $C(n^*)$ of Figure~\ref{fig:e}, provided that the population
distribution is optimal at the current time step ($n_{t-1}^A = n^*$).
Furthermore, Figure~\ref{fig:ushaped5longrun} shows the time-averaged
social cost $\hat C_T$ can achieve values close to the theoretical
minimum social cost by setting the parameter $\sigma$ appropriately.
Hence, $\sigma$-scalar signalling is useful even when we do not satisfy
the condition $n_{t-1}^A = n^*$ for many time indices $t$.

\begin{remark}[Optimal $\sigma$]
  It is clear from Figure~\ref{fig:e} and by substituting $p_{\sigma,n}
  \triangleq \Phi_\sigma(c_B(N-n) - c_A(n))$ into Theorem~\ref{pro:1}
  that we can readily minimise the next-step expected social cost by
  optimising the amount of noise 
  $\sigma$.
\end{remark}

Next, we quantify the concentration of the social cost 
around its expected value, which is already hinted at 
in the error bars of Figure~\ref{fig:e}.

\begin{theorem}[Concentration of Next-step Social Cost]\label{thm:1}
  Suppose that the assumptions of Theorem~\ref{pro:1} and
  that the functions $c_A$ and $c_B$ are Lipschitz with constant $L$.  We have
  \begin{align*}
    \P\left( \abs{ \frac{C(n^A_t)}{C(n^*)} - \frac{\E
          C(n^A_t)}{C(n^*)} } \geq \eps \right) \leq 2 \exp
    \left(-\frac{2 \eps^2 C(n^*) }{2(L+1)}\right),
  \end{align*}
  where the probability is conditioned on $n^A_{t-1} = n^*$.
\end{theorem}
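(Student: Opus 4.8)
The plan is to invoke McDiarmid's bounded differences inequality. The structural fact I would build on is already contained in the proof of Lemma~\ref{le:1}: conditioned on $n^A_{t-1}=n^*$, the indicators $X_i \defeq 1_{[a^i_t=A]}$, $i=1,\dots,N$, are independent (indeed i.i.d.\ Bernoulli with parameter $p_{\sigma,n^*}$). Consequently $n^A_t=\sum_{i=1}^N X_i$, and $C(n^A_t)$ is a deterministic function $f(X_1,\dots,X_N)$ of $N$ independent random variables. This is precisely the regime in which bounded-differences concentration around the mean $\E C(n^A_t)$ applies, and the two-sided sub-Gaussian tail $2\exp(-c\eps^2)$ of the statement is the signature of such a bound.

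Next I would compute the bounded-differences constant, i.e.\ the coordinatewise oscillation of $f$. Flipping a single $X_i$ changes $n^A_t$ by exactly one (and $n^B_t=N-n^A_t$ by one in the opposite direction), so this oscillation equals $\sup_n\abs{C(n+1)-C(n)}$. I would bound this discrete derivative by expanding $C(n)=\tfrac{n}{N}c_A(n)+\tfrac{N-n}{N}c_B(N-n)$ and telescoping each product, e.g.
\[
  \tfrac{n+1}{N}c_A(n+1)-\tfrac{n}{N}c_A(n)=\tfrac{n+1}{N}\bigl(c_A(n+1)-c_A(n)\bigr)+\tfrac{1}{N}c_A(n),
\]
and symmetrically for the $c_B$ term. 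Using $L$-Lipschitzness ($\abs{c_A(n+1)-c_A(n)}\le L$, likewise for $c_B$) together with $\tfrac{n}{N},\tfrac{N-n}{N}\le 1$, the two slope contributions add to at most $L$, leaving a residual $\tfrac1N\bigl(c_A(n)+c_B(N-n)\bigr)$ coming from the changing weights. Hence $\sup_n\abs{C(n+1)-C(n)}\le L+\tfrac1N\bigl(c_A(n)+c_B(N-n)\bigr)$, which under boundedness of the cost values is at most $L+1$.

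I would then apply McDiarmid to the normalized function $f/C(n^*)$, for which each of the $N$ coordinates has oscillation at most $(L+1)/C(n^*)$, and read off the tail conditioned on $n^A_{t-1}=n^*$. The main obstacle is not the probabilistic content, which is routine once independence from Lemma~\ref{le:1} is in hand, but rather pinning down the exact constant in the exponent. McDiarmid contributes $\sum_i c_i^2$ in the denominator, so I would have to track carefully how the factors of $N$, of $C(n^*)$ from the normalization, and of $L+1$ from the slope bound combine, and in particular how the residual weight term $\tfrac1N(c_A+c_B)$ is absorbed into the ``$+1$.'' Reconciling this accounting with the precise constant $\tfrac{2\eps^2 C(n^*)}{2(L+1)}$ displayed in the statement is the only genuinely delicate point.
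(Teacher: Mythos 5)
Your overall route coincides with the paper's: both condition on $n^A_{t-1}=n^*$, use the independence of the indicators $X_i = 1_{[a^i_t=A]}$ inherited from Lemma~\ref{le:1}, view $C(n^A_t)/C(n^*)$ as a function $f(X_1,\ldots,X_N)$ of independent variables, and apply McDiarmid's bounded-differences inequality. The gap lies in the bounded-differences constant, and it is not a matter of bookkeeping. You interpret $L$-Lipschitzness on the integer scale, $\abs{c_A(n+1)-c_A(n)}\leq L$; your telescoping algebra is then correct (the weights $\tfrac{n+1}{N}$ and $\tfrac{N-n-1}{N}$ summing to one is handled properly), but it yields a per-coordinate oscillation of order $(L+2/N)/C(n^*)$, a quantity that does \emph{not} shrink as $N$ grows. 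McDiarmid then gives
\[
  \P\left(\abs{f-\E f}\geq \eps\right) \leq 2\exp\left(-\frac{2\eps^2\, C(n^*)^2}{N\,(L+2/N)^2}\right),
\]
whose exponent vanishes as $N\to\infty$: the bound is vacuous for any large population, and no amount of constant-tracking can turn $\sum_i c_i^2 \approx N(L+1)^2/C(n^*)^2$ into an $N$-free exponent like the one in the statement. The issue you flag at the end as ``the only genuinely delicate point'' is therefore not a delicate point but the place where your argument fails: under your reading of the hypothesis, the theorem cannot be recovered.

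The paper avoids this by making each coordinate's influence $O(1/N)$: in its proof the cost functions are evaluated at the population \emph{fraction} $\sum_i X_i/N$ (consistent with the running example $c_A(n)=1.2+n/N$), so flipping a single $X_i$ moves the argument of $c_A$ and $c_B$ by $1/N$, and Lipschitzness contributes $L/N$ rather than $L$; together with the $1/N$ change in the mixing weights, the per-coordinate oscillation is $2(1+L)/(N\,C(n^*))$, and McDiarmid produces an exponent growing linearly in $N$. (For completeness: the paper's own constants are also not clean --- plugging its oscillation bound into McDiarmid gives an exponent proportional to $\eps^2 N\, C(n^*)^2/(1+L)^2$, which does not literally match the displayed $2\eps^2 C(n^*)/(2(L+1))$ --- but the structurally essential point, which your proposal misses, is that one agent's flip must perturb the normalised social cost by $O(1/N)$ for the concentration to be non-trivial.)
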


\begin{proof}
  Let
  \begin{align*}
    X_i = 1_{[a^i_t = A]}, \quad\mbox{for all }i=1,\ldots,N,
  \end{align*}
  so that $n^A_t = \sum_i X_i$.  Let $f$ be such that
  \begin{align*}
    &f(X_1,\ldots,X_N)\\
    &= C(n^A_t) / C(n^*)\\
    &= \frac{1}{C(n^*)} \Big[\frac{\sum_i X_i}{N} \cdot
    c_A\left(\frac{\sum_i X_i}{N}\right)\\
    &\quad\quad+ \frac{N-\sum_i X_i}{N} \cdot c_B\left(\frac{N-\sum_i
        X_i}{N}\right) \Big].
  \end{align*}
  Let $x_1,\ldots,x_N,\hat x_i \in \{0,1\}$.  Observe that since $c_A$
  and $c_B$ take values in $[0,1]$ and are $L$-Lipschitz by
  assumption, by simple algebra, we obtain
  \begin{align*}
    &\sup \abs{f(x_1,\ldots,x_N) - f(x_1,\ldots,\hat x_i,\ldots,x_N)}\\
    &\leq \frac{2(1/N + L / N)}{C(n^*)},
  \end{align*}
  for all $i$.  Hence, by McDiarmid's inequality, we obtain the claim.
\end{proof}

Next, 
  we consider the normalised random process $n^A_t / N$
for $t = 0, 1, 2, \ldots$ In particular, we consider the expectation
of that process, i.e., the sequence
\begin{align*}
  x^i_t \triangleq \P(a^i_t = A) = \E \frac{n^A_t}{N}, \quad
  t=1,2,\ldots
\end{align*}
We show that this sequence converges.
In order words, the action profile of every agent converges under
$\sigma$-scalar signalling if every agent follows the $\pi^1$ policy \eqref{eq:d1}.

\begin{theorem}[Convergence]\label{thm:conv}
  Suppose that the assumptions of Theorem~\ref{le:1} hold.  For every
  initial condition $x^i_1 = \P(a^i_1 = A)$, the sequence $x^i_t =
  \P(a^i_t = A)$ converges as $t\to\infty$.
\end{theorem}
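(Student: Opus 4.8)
The plan is to recognise the sequence $\{n^A_t\}_{t\geq 1}$ as a time-homogeneous Markov chain on the finite state space $\{0,1,\ldots,N\}$ and to invoke the classical convergence theorem for finite chains whose transition matrix is strictly positive. The first step is to justify the Markov property: because the noise variables $\{(w^{i,A}_t,w^{i,B}_t)\}$ are independent across time and drawn afresh at each step, conditioning on the whole history $n^A_1,\ldots,n^A_{t-1}$ is the same as conditioning on $n^A_{t-1}$ alone. Lemma~\ref{le:1} then supplies the one-step kernel explicitly,
\begin{align*}
  P(n,m) \triangleq \P(n^A_t = m \mid n^A_{t-1} = n) = \binom{N}{m}\, p_{\sigma,n}^m (1-p_{\sigma,n})^{N-m},
\end{align*}
with $p_{\sigma,n} = \Phi_\sigma(c_B(N-n)-c_A(n))$, so that the law $\nu_t$ of $n^A_t$ evolves by $\nu_t = \nu_1 P^{\,t-1}$.

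The second step is to observe that $P$ is a \emph{strictly positive} stochastic matrix. Since $\sigma > 0$, the Gaussian tail function $\Phi_\sigma$ takes values strictly inside $(0,1)$, whence $0 < p_{\sigma,n} < 1$ for every $n$; together with $\binom{N}{m}\geq 1$ this forces $P(n,m) > 0$ for all $n,m\in\{0,\ldots,N\}$. In particular every diagonal entry is positive, so the chain is simultaneously irreducible and aperiodic, i.e.\ $P$ is primitive.

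The third step is to apply the Perron--Frobenius theorem (equivalently, the fundamental convergence theorem for ergodic finite chains): a primitive stochastic matrix admits a unique stationary distribution $\pi$, and $\nu_1 P^{\,t-1} \to \pi$ (in fact geometrically) for \emph{every} initial law $\nu_1$, which is precisely the ``for every initial condition'' clause of the statement. Finally I would note that, by the exchangeability of the homogeneous $\pi^1$-population, $x^i_t = \P(a^i_t = A) = \E(n^A_t/N) = \sum_{n=0}^N (n/N)\,\nu_t(n)$ is a fixed linear functional of $\nu_t$; since $\nu_t\to\pi$ on a finite state space, this functional converges to $\sum_{n=0}^N (n/N)\,\pi(n)$, the desired limit.

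I expect the only genuine content to be the framing: verifying the Markov property and reducing the per-agent quantity $x^i_t$ to the single linear functional $\E(n^A_t/N)$, both of which rest on the freshness and exchangeability of the noise at each step. Everything downstream — positivity of $P$ and the convergence itself — is routine finite-chain theory, so no quantitative control of the mixing rate is required for this qualitative claim.
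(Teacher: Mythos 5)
Your proposal is correct, but it follows a genuinely different route from the paper's proof. The paper stays one-dimensional: it expands $x^i_t = \sum_m \Phi_\sigma(c_B(N-m)-c_A(m))\,\P(n^A_{t-1}=m)$, substitutes $\P(n^A_{t-1}=m) = \binom{N}{m}(x^i_{t-1})^m(1-x^i_{t-1})^{N-m}$ to obtain a scalar iteration $x^i_t = f(x^i_{t-1})$, asserts that $f$ is a contraction, and concludes via the Banach fixed-point theorem. You instead lift the problem to the law $\nu_t$ of $n^A_t$ on the finite state space $\{0,\ldots,N\}$, identify from Lemma~\ref{le:1} a time-homogeneous Markov chain whose transition matrix is strictly positive (since $0 < \Phi_\sigma(\cdot) < 1$ whenever $\sigma>0$), and conclude by the ergodic theorem for primitive stochastic matrices, recovering $x^i_t$ as the fixed linear functional $\sum_n (n/N)\,\nu_t(n)$ of the converging laws. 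Your route buys robustness at exactly the two points where the paper's argument is fragile. First, the substitution $\P(n^A_{t-1}=m)=\binom{N}{m}(x^i_{t-1})^m(1-x^i_{t-1})^{N-m}$ holds only conditionally on $n^A_{t-2}$; unconditionally the law of $n^A_{t-1}$ is a \emph{mixture} of binomials, so the exact scalar recursion $x^i_t=f(x^i_{t-1})$ requires justification the paper does not give, whereas your formulation tracks the full (mixture) law and never needs this step. Second, the paper leaves the contraction constant $\ell<1$ as ``easy to verify,'' which is in fact delicate for small $\sigma$, where the slope of $\Phi_\sigma$ scales like $1/\sigma$ and the Lipschitz constant of $f$ is not obviously below one; your argument replaces this quantitative estimate with the purely qualitative observation that $0<p_{\sigma,n}<1$ for every $n$, which is immediate. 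What the paper's approach would buy, were the contraction verified, is the explicit one-dimensional dynamics plotted in Figure~\ref{fig:d} together with a geometric rate for the mean; your approach also yields a geometric rate, but for the entire distribution of $n^A_t$ and from every initial law, which moreover delivers the convergence in distribution of $n^A_t/N$ and $C(n^A_t)$ stated in the corollary following Theorem~\ref{thm:conv} as an immediate by-product.
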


\begin{proof}
  Observe that, for all $t > 2$, we have
  \begin{align}
    x^i_t &= \P(a^i_t = A)\\
    &= \P(w^A_{i,t} - w^B_{i,t} < c_B(N-n^A_{t-1}) -
    c_A(n^A_{t-1}) )\\
    &= \sum_m \P(w^A_{i,t} - w^B_{i,t} < c_B(N-m) - c_A(m) )
    \P(n^A_{t-1} = m)\\
&= \sum_m 
\Phi_\sigma(c_B(N-m) - c_A(m)) \P(n^A_{t-1} = m),\label{eq:49}
  \end{align}
  where
  \begin{align}\label{eq:50}
    \P(n^A_{t-1} = m) &= \binom{N}{m} \P(a^i_{t-1})^m
    (1-\P(a^i_{t-1}))^{N-m}\\
    &= \binom{N}{m} (x^i_{t-1})^m (1-x^i_{t-1})^{N-m}.
  \end{align}

  By combining \eqref{eq:49} and \eqref{eq:50}, we obtain a function
  $f:\mathbb R \to \mathbb R$ that describes the iterated function
  process $\{x^i_t : t=1,2,\ldots\}$:
  \begin{align*}
    x^i_t = f(x^i_{t-1}), \quad\mbox{for all }t.
  \end{align*}
  It is easy to verify that there exists an $\ell < 1$ such that for
  all $x,y$,
  \begin{align*}
    \abs{f(x) - f(y)} \leq \ell \abs{x - y}.
  \end{align*}
  Hence, $f$ is a contraction and the claim follows from Banach
  fixed-point theorem.
\end{proof}

\begin{figure}[t]
  \begin{subfigure}[t]{0.5\textwidth}
    \includegraphics[clip=true,width=\textwidth]{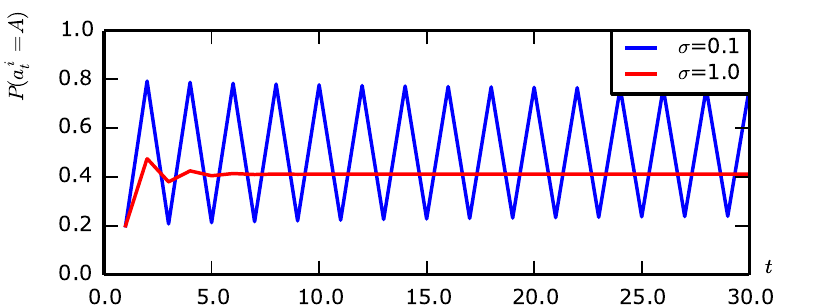}
    \caption{Iterated function process $x^i_t$ for various values of
      $\sigma$.  }
    \label{fig:d}
  \end{subfigure}
  \begin{subfigure}[t]{0.5\textwidth}
    \includegraphics[clip=true,width=\textwidth]{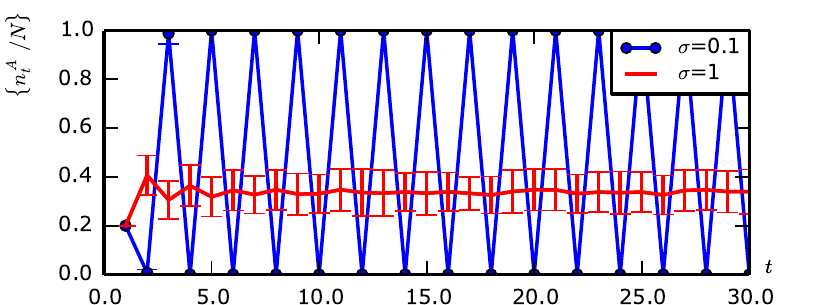}
    \caption{The process $\{n^A_t/N\}$ over time.}
    \label{fig:ushaped5simulations_states}
  \end{subfigure}
  \caption{$\sigma$-scalar signalling, homogeneous population of
    $N=40$, and the cost functions of
    Figure~\ref{fig:ushaped5settings_funs}.
}
  \label{fig:ushaped5simulations}
\end{figure}

To illustrate Theorem~\ref{thm:conv}, Figure~\ref{fig:d} shows the
process $x^i_t$ for various values of $\sigma$, whereas
Figure~\ref{fig:ushaped5simulations_states} shows the related random
process $\{n^A_t/N\}$.
Observe that for the parameter value $\sigma=0.6$, the
process $\{n^A_t/N\}$ converges to values close to the optimal
population profile $n^*/N = 0.2$
(cf.~Figure~\ref{fig:ushaped5settings_costs}), whereas for other
parameter values, we see flapping in the population profiles and
values away from the optimum.

As a corollary of Theorem~\ref{thm:conv}, we obtain the following.

\begin{corollary}
  The random processes $n_t^A/N$ and $C(n_t^A)$ converge in distribution as $t\to\infty$.
\end{corollary}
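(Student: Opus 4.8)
The plan is to combine Theorem~\ref{thm:conv} with the exact law of $n^A_t$ already recorded in \eqref{eq:50}. First I would invoke Theorem~\ref{thm:conv} to obtain that the common value $x^i_t = \P(a^i_t = A)$ (common because the population is homogeneous) converges to the attracting fixed point $\bar x \in [0,1]$ of the contraction $f$. The crucial structural fact I would then use is that, by \eqref{eq:50}, the entire law of $n^A_t$---not merely its mean---is determined by the single scalar $x^i_t$: the process $n^A_t$ is binomially distributed with $N$ trials and success probability $x^i_t$.

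With this in hand, convergence in distribution follows quickly. For each fixed $m \in \{0,1,\ldots,N\}$ the map
\begin{align*}
  p \mapsto \binom{N}{m} p^m (1-p)^{N-m}
\end{align*}
is a polynomial, hence continuous on $[0,1]$, so I would conclude that
\begin{align*}
  \P(n^A_t = m) = \binom{N}{m} (x^i_t)^m (1-x^i_t)^{N-m}
  \longrightarrow \binom{N}{m} (\bar x)^m (1-\bar x)^{N-m}
\end{align*}
as $t \to \infty$. Since $n^A_t$ takes values in the finite set $\{0,1,\ldots,N\}$, this pointwise convergence of the probability mass function is exactly convergence in distribution, and the same holds after the deterministic rescaling by $1/N$. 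Thus $n^A_t/N$ converges in distribution to the law placing mass $\binom{N}{m}(\bar x)^m(1-\bar x)^{N-m}$ at the point $m/N$.

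For the second process I would observe that $C$ is a fixed deterministic function, so $C(n^A_t)$ takes values in the finite set $\{C(0),\ldots,C(N)\}$ and its law is the pushforward of the law of $n^A_t$ under $C$. Convergence in distribution is preserved under a fixed map with finite range (equivalently by the continuous mapping theorem), so $C(n^A_t)$ converges in distribution as well.

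The step that carries the real content is the first: upgrading convergence of the mean $x^i_t$ to convergence of the whole distribution. This is painless only because \eqref{eq:50} exhibits the law of $n^A_t$ as a continuous image of the scalar $x^i_t$, letting the one-dimensional limit from Theorem~\ref{thm:conv} propagate to the full simplex of distributions. The one modelling point I would flag is that the binomial description presumes ties in the agents' $\arg\min$ have probability zero; this is ensured by the continuity of the Gaussian signal noise, so the pushforward under $C$ is well defined and no further care is required.
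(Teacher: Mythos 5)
Your proposal is correct and follows essentially the same route the paper intends: the paper states the corollary as an immediate consequence of Theorem~\ref{thm:conv}, relying implicitly on the fact recorded in \eqref{eq:50} that the law of $n^A_t$ is the binomial distribution with parameter $x^i_t$, so convergence of $x^i_t$ propagates to the full distribution, and $C(n^A_t)$ converges by continuous mapping. Your write-up simply makes this implicit argument explicit (including the finite-support and tie-breaking observations), so there is no substantive difference from the paper's reasoning.
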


\begin{remark}[Limit of $C(n_t^A)$]
  It follows from Theorem~\ref{thm:conv} that the limit distribution
  of the random process $C(n_t^A)$ depends on the parameter $\sigma$,
  which can be optimised.
\end{remark}

\begin{figure}[t]
\begin{subfigure}[t]{0.5\textwidth}
  \includegraphics[clip=true,width=0.99\textwidth]{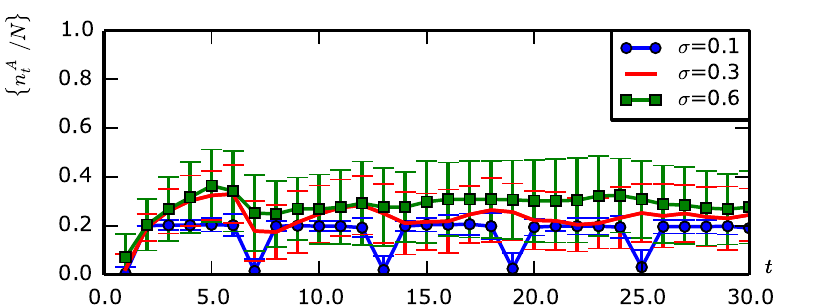}
  \caption{The processes
    $\{n^A_t/N\}$ over time for a uniform population distribution
    $\mu$ over delays of $1, 2, \ldots, 5$. 
}
  \label{fig:ushaped5simulations_delayed20}
\end{subfigure}
\begin{subfigure}[t]{0.5\textwidth}
  \includegraphics[clip=true,width=0.99\textwidth]{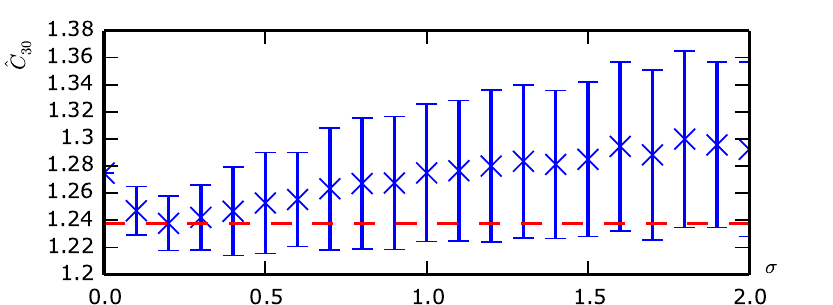}
                \caption{
The time-averaged social cost $\hat C_{30}$ versus parameter $\sigma$
for 
uniform population distribution $\mu$ over delays of $1, 2, \ldots, 20$. 
}
\label{fig:ushaped5simulations_delayed_longrun_quick}
\end{subfigure}
\caption{
$\sigma$-scalar signalling on the cost functions of
Figure~\ref{fig:ushaped5settings_funs} with $N=40$ agents.}
\end{figure}

\subsubsection{Heterogeneous Population}
In this section, we consider a distribution $\mu = (\mu_1,\ldots,\mu_K)$ of agents over the
policies $\pi^1,\ldots,\pi^K$, i.e., with $\mu_j N$ agents with delays
$j$ for $j=1,\ldots,K$.  We first derive the following analogue of
Lemma~\ref{le:1}.  

\begin{lemma}[Conditional Distribution of $n^A_t$]\label{le:2}
  Suppose that Assumption~\ref{as:pop} holds. Consider an arbitrary
  time step $t>K$ and suppose that $n^A_{t-k} = n_0$ for all
  $k=1,\ldots,K$.  Let $p_\sigma \triangleq \Phi_\sigma(c_B(N-n_0) -
  c_A(n_0))$.  Then, we have
  \begin{align*}
    &\P(n^A_t = m \mid n^A_{t-k} = n_0 \mbox{ for }k=1,\ldots,K)\\
    &= \sum_{\abs{\alpha} = m} \binom{m}{\alpha} \prod_{j=1}^K
    \binom{\mu_j N}{\alpha_j} p_\sigma^{\alpha_j} (1-p_\sigma)^{\mu_j
      N-\alpha_j},
  \end{align*}
  where $\alpha = (\alpha_1,\ldots,\alpha_K)$ denotes a multi-index.
\end{lemma}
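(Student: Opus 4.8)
The plan is to reduce the heterogeneous case to an independent sum of per-type binomials, reusing the one-step computation from Lemma~\ref{le:1} while tracking the $K$ distinct delays. First I would fix a time $t>K$ and argue conditionally on the event $\{n^A_{t-k}=n_0 : k=1,\ldots,K\}$. The key observation is that, under this conditioning, every agent---whatever its type $j\in\{1,\ldots,K\}$---acts at time $t$ on a signal that encodes the common congestion level $n_0$: a type-$j$ agent compares the two delayed cost signals, whose deterministic parts are $c_A$ and $c_B$ evaluated at the delayed population count, and by hypothesis that count equals $n_0$ at each relevant delay. Hence, repeating the calculation in the proof of Lemma~\ref{le:1} verbatim, each agent chooses $A$ with one and the same probability
\begin{align*}
  \P(a^i_t = A) = \Phi_\sigma\big(c_B(N-n_0) - c_A(n_0)\big) = p_\sigma.
\end{align*}

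Next I would pin down the independence structure. A type-$j$ agent $i$ decides by comparing $w^{i,A}_{t-j}$ with $w^{i,B}_{t-j}$, and since $\{(w^{i,A}_t,w^{i,B}_t)\}$ is \iid across \emph{both} the agent index and the time index, distinct agents---of the same type or of different types---never share a noise variable. Therefore the indicators $\{1_{[a^i_t=A]} : i=1,\ldots,N\}$ are mutually independent Bernoulli$(p_\sigma)$ random variables. Grouping the agents by type and letting $\alpha_j$ be the number of the $\mu_j N$ type-$j$ agents that select $A$, this independence gives $\alpha_j \sim \mathrm{Binomial}(\mu_j N, p_\sigma)$ with $\alpha_1,\ldots,\alpha_K$ independent.

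Since $n^A_t = \sum_{j=1}^K \alpha_j$, I would finish by conditioning on the per-type split. The event $\{n^A_t = m\}$ is the disjoint union, over all multi-indices $\alpha=(\alpha_1,\ldots,\alpha_K)$ with $|\alpha|=m$, of the events $\{\alpha_j \text{ type-}j \text{ agents choose } A \text{ for all } j\}$; on each of these the probability factorises across types as $\prod_{j=1}^K \binom{\mu_j N}{\alpha_j} p_\sigma^{\alpha_j}(1-p_\sigma)^{\mu_j N-\alpha_j}$, and summing over $\{|\alpha|=m\}$ assembles these per-type binomial factors into the multi-index sum of the statement.

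I expect the only genuine difficulty to be the indexing discipline rather than any probabilistic subtlety. One must verify that conditioning on $n^A_{t-k}=n_0$ for $k=1,\ldots,K$ is \emph{exactly} enough to force the effective congestion seen by each of the $K$ delay classes to the single value $n_0$; an off-by-one between the delay $j$ and the conditioning window could leave the slowest class reading a count that lies just outside the conditioned range. As a consistency check on the final expression I would confirm that the case $K=1$ recovers Lemma~\ref{le:1}; more generally, because all classes share the same $p_\sigma$, the Vandermonde convolution $\sum_{|\alpha|=m}\prod_{j=1}^K \binom{\mu_j N}{\alpha_j} = \binom{N}{m}$ should let the multi-index sum collapse back to the plain binomial $\binom{N}{m}p_\sigma^m(1-p_\sigma)^{N-m}$, which is a reassuring sanity test on the claimed formula.
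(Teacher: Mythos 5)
Your derivation follows the same route as the paper's own proof: group agents by delay type, observe that under the stated conditioning each per-type count $n^A_{j,t}$ is Binomial$(\mu_j N, p_\sigma)$ by the argument of Lemma~\ref{le:1}, invoke independence of the noise across agents and time, and decompose $\{n^A_t=m\}$ over multi-indices. However, your final step contains a genuine mismatch that you gloss over: the sum you actually derive,
\begin{align*}
  \P(n^A_t = m \mid n^A_{t-k}=n_0,\ k=1,\ldots,K)
  = \sum_{\abs{\alpha} = m} \prod_{j=1}^K \binom{\mu_j N}{\alpha_j}
  p_\sigma^{\alpha_j} (1-p_\sigma)^{\mu_j N-\alpha_j},
\end{align*}
is \emph{not} ``the multi-index sum of the statement'': the statement (and the paper's proof) carries an extra multinomial coefficient $\binom{m}{\alpha}$ inside the sum. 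The two expressions genuinely differ; for instance with $K=2$, $m=2$, the term $\alpha=(1,1)$ is doubled by $\binom{2}{1,1}=2$ in the stated formula.

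The resolution is that your formula is the correct one and the lemma as printed is in error. For independent integer-valued random variables, the convolution identity is $\P(\sum_j n^A_{j,t}=m)=\sum_{\abs{\alpha}=m}\prod_j \P(n^A_{j,t}=\alpha_j)$ with no combinatorial factor: the events indexed by distinct multi-indices are already disjoint, and inserting $\binom{m}{\alpha}$ double-counts, making the ``probabilities'' sum to more than one (try $K=2$ with two Bernoulli$(p)$ counts: the stated formula gives $2p^2$ for $m=2$). Your own Vandermonde sanity check is precisely the proof of this: under the conditioning, all $N$ agents choose $A$ independently with the same probability $p_\sigma$, so the conditional law of $n^A_t$ must be exactly Binomial$(N,p_\sigma)$, and only the coefficient-free sum collapses to $\binom{N}{m}p_\sigma^m(1-p_\sigma)^{N-m}$. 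So rather than asserting agreement with the statement, you should flag the spurious $\binom{m}{\alpha}$ as an erratum. Your off-by-one worry about the conditioning window is also legitimate but inherited from the paper: under the literal definitions (policy $\pi^k$ uses signal $s^i_{t-k}$, which encodes $n^A_{t-k-1}$), the type-$K$ agents read $n^A_{t-K-1}$, just outside the conditioned range, whereas under the convention actually used in the proof of Lemma~\ref{le:1} (the delay-$k$ decision at time $t$ depends on $n^A_{t-k}$) the window $k=1,\ldots,K$ is exact; the inconsistency lies between the paper's definitions and its proofs, not in your argument.
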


\begin{proof}
  Let $n^A_{j,t}$ denote the number of agent of type $j$ who choose
  action $A$ at time $t$.  By the same argument as Lemma~\ref{le:1},
  we have
  \begin{align*}
    \P(n^A_{j,t} = m) = \binom{\mu_j N}{m} p_\sigma^m
    (1-p_\sigma)^{\mu_j N-m}, \quad j=1,\ldots,K.
  \end{align*}
  By definition, we have
  \begin{align*}
    \P(n^A_t = m) &= \P\left(\sum_{j=1}^K n^A_{j,t} = m \right)\\
    &= \sum_{\abs{\alpha} = m} \binom{m}{\alpha} \P(n^A_{1,t} =
    \alpha_1) \ldots \P(n^A_{K,t} = \alpha_K).
  \end{align*}
  The claim follows by algebra.
\end{proof}

Using Lemma~\ref{le:2}, we can readily derive analogues of 
  the expected next-step social cost (Theorem~\ref{pro:1}), the
concentration of
  social cost (Theorem~\ref{thm:1}), and the convergence of action profiles (Theorem~\ref{thm:conv}) guarantees for heterogeneous 
  populations.
Figures~\ref{fig:ushaped5simulations_delayed20} illustrates the
convergence of the population profile $n_t^A/N$ with a heterogeneous
population under $\sigma$-scalar signalling.
Observe that, as in
    Figure~\ref{fig:ushaped5simulations_states}, a cyclic
    flapping behaviour is visible at low values of $\delta$, e.g., $\delta = 0.1$.

An empirical study of the time-averaged social cost $\hat C_t$ shows a
dependence on the parameter $\sigma$ similar to the next-step social
cost $C(n_t^A)$, which can be seen by comparing
Figure~\ref{fig:ushaped5simulations_delayed_longrun_quick} to 
Figure~\ref{fig:e}.

\section{Interval Signalling}
\label{sec:mod3}

In this section, we model agents with various levels of risk aversion,
and present a broadcast signalling scheme that desynchronises
them. Each signal is composed of pairs of endpoints, hence, the name
interval signalling.
The overall closed-loop system composed of the central agent and the
agents is shown in Figure~\ref{fig:diag2}.

\begin{figure}
  \centering
\includegraphics[width=0.99\textwidth,clip=true,trim=8.5cm 8cm 5.5cm 4.5cm]{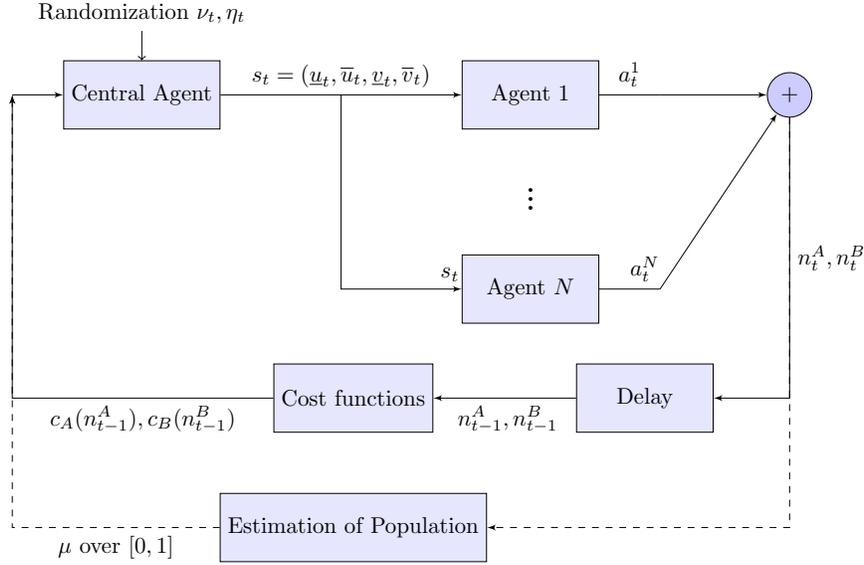}
  \caption{Block diagram for interval signalling.}
  \label{fig:diag2}
\end{figure}

\subsection{$(\delta,\gamma)$-Interval signalling} 
In this section, we present a
signalling scheme that broadcast the same signal to all agents.  Let
$\delta$ and $\gamma$ denote non-negative constants.  In the
$(\delta,\gamma)$-interval signalling scheme, a central agent
broadcasts to all agents the same signal $s^i_t = s_t$ for all
$i$. 
This is in constrast to $\sigma$-scalar signalling, where the central agent sends a 
distinct signal to every agent $i$.
For every time step $t$, the signal $s_t = (\underline{u}^A_t, \overline{u}^A_t,
\underline{u}^B_t, \overline{u}^B_t) \in \mathbb R^4$ is defined as follows\footnote{This corresponds
  to the case $d=4$ in \eqref{eq:81}.}:
\begin{align}
  \underline{u}^A_t \triangleq c_A(n^A_{t-1}) + \nu_t - \delta/2,\quad
  &
  \overline{u}^A_t \triangleq c_A(n^A_{t-1}) + \nu_t + \delta/2,\\
  \underline{u}^B_t \triangleq c_B(n^B_{t-1}) + \eta_t -
  \gamma/2,\quad & \overline{u}^B_t \triangleq c_B(n^B_{t-1}) + \eta_t
  + \gamma/2.
\end{align}
where $\nu_t$ and $\eta_t$ are \iid uniform random variables with
supports:
\begin{align*}
  \mbox{Supp}(\nu_t) &= [-\delta/2,\delta/2],\quad \mbox{Supp}(\eta_t)
  = [-\gamma/2,\gamma/2].
\end{align*}
Figure~\ref{fig:is} illustrates the interval $[\underline{u}^A_t ,
\overline{u}^A_t]$.

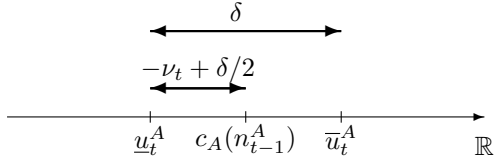
\begin{figure}
  \centering \setlength{\unitlength}{.01in}
  \begin{picture}(200,90) \put(0,25){\vector(1,0){250}}
    \put(75,22){\line(0,1){6}}
    \put(125,22){\line(0,1){6}}
    \put(175,22){\line(0,1){6}}
    \thicklines
    \put(250,5){\makebox(0,0)[b]{$\mathbb R$}}
    \put(75,5){\makebox(0,0)[b]{$\underline{u}^A_t$}}
    \put(125,5){\makebox(0,0)[b]{$c_A(n^A_{t-1})$}}
    \put(175,5){\makebox(0,0)[b]{$\overline{u}^A_t$}}
    \put(75,40){\vector(1,0){50}}
    \put(125,40){\vector(-1,0){50}}
    \put(100,50){\makebox(0,0){$-\nu_t+\delta/2$}}
    \put(75,70){\vector(1,0){100}}
    \put(175,70){\vector(-1,0){100}}
    \put(120,80){\makebox(0,0){$\delta$}}
  \end{picture}  
  \caption{An illustration of interval signalling.}
  \label{fig:is}
\end{figure}

By construction, it is clear that $(\delta,\gamma)$-interval signalling
scheme has the following
property, which we call \emph{interval-truthfulness}:
\begin{align*}
  c_A(n^A_{t-1}) &\in [\underline{u}^{A,i}_t,
  \overline{u}^{A,i}_t] \mbox{ and } c_B(n^B_{t-1}) \in
  [\underline{u}^{B,i}_t, \overline{u}^{B,i}_t]
\end{align*}
with probability $1$ for all $i$ and $t$.
We will show in Section~\ref{sec:int} that it is also possible to pick the 
parameters $(\delta,\gamma)$ to reduce the social cost.

\subsubsection{$\pi^\lambda$-policies} 

In the case of interval signalling, we
consider a set of agent types $\Omega = [0,1]$, where the type
captures a level of risk aversion.
Recall that every agent receives the same interval signal $s_t :=
(\underline{u}_t, \overline{u}_t, \underline{v}_t, \overline{v}_t)$.
In response, we assume that each agent of type $\lambda$ follows
the policy $\pi^\lambda$:
\begin{align}\label{eq:p2}
  a_t^i = \pi^\lambda(S^i_{t}) = \pi^\lambda(s_t) := \arg\min_{X \in
    \{A,B\}} \lambda \underline{u}^X_t + (1-\lambda) \overline{u}^X_t.
\end{align}
These policies naturally model a notion of risk aversion.  Observe that for $\lambda = 0$, policy
$\pi^0$ models a risk-averse agent, who makes decisions based solely
on upper endpoints $\overline{u}^A_t$ and $\overline{u}^B_t$ of the
respective intervals.  Similarly, $\pi^1$ and
$\pi^{1/2}$ model risk-seeking and risk-neutral agents.  Although the
extremes may be rare, it seems plausible that people combine the
optimistic and pessimistic views in this fashion.  The following
example illustrates these policies.


\begin{example}
  Suppose that at time $t=1$, we have $c_A(n^A_1) = 1$ and $c_B(n^B_1)
  = 1$. For $\delta = 1, \gamma = 0.5$, the interval signal $s_{2} :=
  (\underline{u}_2, \overline{u}_2, \underline{v}_2, \overline{v}_2)$
  at time $2$, could take the realisation of $(0.5, 1.5, 0.8, 1.3)$.
  The risk-seeking agent with $\lambda = 1$ would pick $A$, whereas
  the risk-averse agent with $\lambda = 0$ would pick $B$.
\end{example}

\subsection{Guarantees}\label{sec:int}

In this section, we consider $(\delta,\gamma)$-interval signalling and
$\pi^\lambda$-policies.  For signalling scheme that
  broadcasts the same signal to all agents, the outcome corresponding
  to homogeneous populations is trivial since all agents would pick the
  same action.  Hence, we consider
  directly a heterogenous population over a set of types $[0,1]$ and
  a population distribution $\mu$.

First, we derive a lemma for the conditional probability of
taking action $A$.
The main result of this section is an expression to
evaluate the expected next-step social cost for a $\mu$-heterogeneous
population under $(\delta,\gamma)$-interval signalling.

\begin{lemma}[Conditional Probability of Action $A$]\label{le:3}
  Let $F_{\delta,\gamma,n}(\lambda) \triangleq \P(\pi^\lambda(s_t) = A
  \mid n^A_{t-1} = n)$ denote, for every $\lambda\in \Omega$, the
  probability of an agent with policy $\pi^\lambda$ choosing action
  $A$ conditioned on the realisation of the random variable
  $n^A_{t-1}$. We have
  \begin{align*}
    F_{\delta,\gamma,n}(\lambda)
    &= \P(\nu_t + \eta_t >c_A(n) + \delta(1/2-\lambda)\\
    &\quad- c_B(N-n) - \gamma(1/2-\lambda)),
  \end{align*}
  where $\nu_t$ and $\eta_t$ are independent uniform random variables
  with supports $[-\delta/2,\delta/2]$ and $[-\gamma/2,\gamma/2]$.
\end{lemma}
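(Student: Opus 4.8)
The plan is to unfold the definition of the policy $\pi^\lambda$ on the interval signal $s_t$, reduce the resulting $\arg\min$ comparison to a single inequality between the two noise variables, and then exploit the symmetry of the uniform distributions about the origin to arrive at the stated form. Since everything is conditioned on $n^A_{t-1} = n$, I write $c_A = c_A(n)$ and $c_B = c_B(N-n)$ throughout.

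First I would substitute the endpoints from the definition of $(\delta,\gamma)$-interval signalling into the linear objective $\lambda \underline{u}^X_t + (1-\lambda)\overline{u}^X_t$ that $\pi^\lambda$ minimises. For action $A$, the common part $c_A(n) + \nu_t$ appears in both endpoints, while the $\mp\delta/2$ terms combine as $\lambda(-\delta/2) + (1-\lambda)(\delta/2) = \delta(1/2-\lambda)$, so the score telescopes to $c_A(n) + \nu_t + \delta(1/2-\lambda)$. The identical computation for action $B$ (using $n^B_{t-1} = N-n$) gives $c_B(N-n) + \eta_t + \gamma(1/2-\lambda)$.

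Next, the $\arg\min$ in $\pi^\lambda$ selects $A$ exactly when the score of $A$ is strictly below that of $B$; ties occur with probability zero because $\nu_t$ and $\eta_t$ are continuous, so the direction of the inequality is immaterial. Hence the event $\{\pi^\lambda(s_t) = A\}$ coincides with $\{c_A(n) + \nu_t + \delta(1/2-\lambda) < c_B(N-n) + \eta_t + \gamma(1/2-\lambda)\}$. Collecting the noise terms on the left, this is $\{\nu_t - \eta_t < c_B(N-n) - c_A(n) + (\gamma-\delta)(1/2-\lambda)\}$.

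Finally I would invoke the symmetry of the two uniform laws about $0$. Since $-\eta_t \stackrel{d}{=} \eta_t$, I may replace $\nu_t - \eta_t$ by $\nu_t + \eta_t$ without changing the probability; and since $\nu_t + \eta_t \stackrel{d}{=} -(\nu_t + \eta_t)$, I may flip the inequality by negating both sides. Writing $X = c_B(N-n) - c_A(n) + (\gamma-\delta)(1/2-\lambda)$, these two reflections turn $\P(\nu_t - \eta_t < X)$ into $\P(\nu_t + \eta_t > -X)$, and $-X = c_A(n) + \delta(1/2-\lambda) - c_B(N-n) - \gamma(1/2-\lambda)$ is precisely the threshold in the claim. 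There is no analytic obstacle here — the statement is an exact bookkeeping identity — so the only thing demanding care is the sign tracking through the convex-combination arithmetic and the double use of symmetry that converts ``$<$'' with $\nu_t-\eta_t$ into ``$>$'' with $\nu_t+\eta_t$.
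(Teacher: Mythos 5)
Your proposal is correct and follows essentially the same route as the paper's proof: unfold the convex-combination scores $\lambda\underline{u}^X_t + (1-\lambda)\overline{u}^X_t$ into $c_A(n)+\nu_t+\delta(1/2-\lambda)$ and $c_B(N-n)+\eta_t+\gamma(1/2-\lambda)$, reduce the $\arg\min$ to a single inequality, and use the symmetry of the uniform noise to replace the signed combination of $\nu_t,\eta_t$ by $\nu_t+\eta_t$. The only cosmetic difference is that the paper keeps the noise as $-\nu_t+\eta_t$ on one side and applies symmetry once, whereas you move $\nu_t-\eta_t$ to the left and apply two reflections; the arithmetic is equivalent.
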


\begin{proof}
  For every $\lambda\in \Omega$, the probability of an agent with
  policy $\pi^\lambda$ choosing action $A$ is
  \begin{align*}
    &\P(\pi^\lambda(s_t) = A)\\
    &= \P(\lambda \underline{u}^A_t + (1-\lambda) \overline{u}^A_t <
    \lambda \underline{u}^B_t +
    (1-\lambda) \overline{u}^B_t)\\
    &= \P(c_A(n^A_{t-1}) + \delta(1/2-\lambda) - c_B(N-n^A_{t-1}) -
    \gamma(1/2-\lambda)\\
    & \quad < -\nu_t + \eta_t),
  \end{align*}
  where the first equality follows by definition of the policy
  $\pi^\lambda$, and the second equality follows by the definition of
  $(\delta,\gamma)$-interval signalling and simple algebra. The claim
  follows from the fact that $-\nu_t$ has the same distribution as
  $\nu_t$ by definition.
\end{proof}

\begin{theorem}[Expected Next-Step Social Cost]\label{pro:3}
  Suppose that the assumptions of Lemma~\ref{le:3} hold with $n =
  n^*$.  Let $p_{\delta,\gamma,n^*} \triangleq \int_{\lambda \in
    [0,1]} F_{\delta,\gamma,n^*}(\lambda) \mu(\ud \lambda)$.  We have
  \begin{align*}
    \frac{\E C(n^A_t)}{C(n^*)} = \sum_{m = -\infty}^{\infty}
    \frac{C(m)}{C(n^*)} \binom{N}{m} p_{\delta,\gamma,n^*}^m
    (1-p_{\delta,\gamma,n^*})^{N-m}.
  \end{align*}
\end{theorem}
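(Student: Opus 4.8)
The plan is to follow the same two-step template used for Theorem~\ref{pro:1}: first reduce the expected social cost to the distribution of $n^A_t$, and then identify that distribution. Conditioning on $n^A_{t-1} = n^*$ and expanding the expectation over the law of $n^A_t$,
\begin{align*}
  \E C(n^A_t) = \sum_{m=-\infty}^{\infty} C(m)\, \P(n^A_t = m \mid n^A_{t-1} = n^*),
\end{align*}
so that, after dividing by $C(n^*)$, it suffices to establish the binomial form
\begin{align*}
  \P(n^A_t = m \mid n^A_{t-1} = n^*) = \binom{N}{m} p_{\delta,\gamma,n^*}^m (1-p_{\delta,\gamma,n^*})^{N-m}.
\end{align*}

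The second step is where the work lies. I would write $n^A_t = \sum_{i=1}^N 1_{[a^i_t = A]}$ and first compute the marginal law of a single indicator. For an agent of type $\lambda$, Lemma~\ref{le:3} gives the conditional success probability $F_{\delta,\gamma,n^*}(\lambda)$; averaging against the population distribution $\mu$ over types yields exactly
\begin{align*}
  \P(a^i_t = A \mid n^A_{t-1} = n^*) = \int_{\lambda\in[0,1]} F_{\delta,\gamma,n^*}(\lambda)\, \mu(\ud\lambda) = p_{\delta,\gamma,n^*}.
\end{align*}
If the $N$ indicators can be treated as independent Bernoulli variables with this common parameter, then $n^A_t$ is a sum of \iid trials and the binomial mass function closes the argument exactly as in Lemma~\ref{le:1}.

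The main obstacle is precisely this independence claim, and it is genuinely more delicate here than in the scalar case. Under $\sigma$-scalar signalling each agent receives its own independent noise $(w^{i,A}_t, w^{i,B}_t)$, so the indicators are \iid by construction and Lemma~\ref{le:1} is immediate. Under $(\delta,\gamma)$-interval signalling, by contrast, the \emph{same} signal $s_t$ is broadcast to every agent, so the randomness $(\nu_t,\eta_t)$ is shared: conditioned on its realisation each agent's action is a deterministic function of its type, and all agents whose types fall in a common threshold set switch together. The indicators are therefore correlated through $(\nu_t,\eta_t)$, and conditioning on the broadcast noise yields only a \emph{mixture} of binomials, $\P(n^A_t = m) = \E_{\nu_t,\eta_t}[\binom{N}{m} q^m (1-q)^{N-m}]$, whose mixing variable $q = q(\nu_t,\eta_t)$ has mean exactly $p_{\delta,\gamma,n^*}$ by the marginalisation above. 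Collapsing this mixture to the single $\mathrm{Binomial}(N,p_{\delta,\gamma,n^*})$ in the statement is the crux: it is exact only when the per-agent success probability can be taken constant in the shared noise — e.g. when the randomisation is effectively per-agent as in Lemma~\ref{le:1}, or in a large-$N$ regime where $q$ concentrates. I expect this collapse to require the most care and the strongest modelling assumption; the remaining pieces (the Lemma~\ref{le:3} evaluation and the binomial bookkeeping) are routine.
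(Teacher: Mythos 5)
Your two steps are precisely the paper's own proof: the paper asserts the binomial law $\P(n^A_t = m \mid n^A_{t-1}=n^*) = \binom{N}{m} p_{\delta,\gamma,n^*}^m (1-p_{\delta,\gamma,n^*})^{N-m}$ in a single line, claiming it holds ``by the definition of $F_{\delta,\gamma,n^*}$'', and then expands $\E C(n^A_t)$ exactly as in \eqref{eq:767}. So you have not taken a different route; what you have done is refuse to assert the one step the paper does not justify, and you are right to refuse. The obstacle is genuine. Under the paper's own population model -- a \emph{fixed} composition in which $\mu(O)N$ agents have types in $O$ -- and a broadcast signal, conditioning on the shared noise $(\nu_t,\eta_t)$ makes every agent's action a deterministic threshold function of its type; hence $n^A_t = N\,\mu(\{\lambda : \pi^\lambda(s_t)=A\})$ is a deterministic function of $(\nu_t,\eta_t)$, and its conditional law is the push-forward of the noise distribution, not a binomial. (The case $\delta=\gamma$ is the extreme illustration: the $\lambda$-dependence cancels in \eqref{eq:p2}, all policies coincide, and $n^A_t \in \{0,N\}$. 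Even with $\delta\neq\gamma$ and $\mu$ uniform, $n^A_t$ is essentially $N$ times a clamped affine function of $\eta_t-\nu_t$, whose variance is of order $N^2$ rather than the binomial's order $N$.) Under the alternative reading in which each agent's type is drawn i.i.d.\ from $\mu$, you get exactly your mixture of binomials; a variance computation (or Jensen) shows the mixture equals $\mathrm{Binomial}(N,p_{\delta,\gamma,n^*})$ only when the mixing variable $q(\nu_t,\eta_t)$ is almost surely constant, which fails generically. Since $C$ is nonlinear, the discrepancy propagates to $\E C(n^A_t)$: the theorem's displayed identity is not merely unproved but false in general under the stated model.

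Two corrections to your own analysis. First, the proposed large-$N$ escape does not exist: $q$ is a function of the shared noise alone, whose distribution does not depend on $N$, so $q$ never concentrates and no law of large numbers rescues the collapse. Second, the modelling assumption that makes the binomial law exact is per-agent independent noise \emph{together with} types drawn i.i.d.\ from $\mu$ -- that is, precisely the situation of Lemma~\ref{le:1}, which the broadcast scheme was designed not to be; with fixed types but independent per-agent noise you would still only obtain a Poisson-binomial, not $\mathrm{Binomial}(N,p_{\delta,\gamma,n^*})$. The honest conclusion is that the crux you identified is a gap in the paper's proof of Theorem~\ref{pro:3}, not a defect of your attempt: what the marginalisation argument actually yields is only $\E[n^A_t \mid n^A_{t-1}=n^*] = N p_{\delta,\gamma,n^*}$, and the claimed identity for $\E C(n^A_t)$ would additionally require the independence across agents that broadcast signalling destroys.
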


\begin{proof}
  By the definition $F_{\delta,\gamma,n^*}(\lambda) \triangleq
  \P(\pi^\lambda(s_t) = A \mid n^A_{t-1} = n^*)$, we have
  \begin{align*}
    \P(n^A_t = m \mid n^A_{t-1} = n^*) = \binom{N}{m}
    p_{\delta,\gamma,n^*}^m (1-p_{\delta,\gamma,n^*})^{N-m}.
  \end{align*}
  The claim follows by the same argument as the proof of
  Theorem~\ref{pro:1} -- cf. \eqref{eq:767}.
\end{proof}

Figure~\ref{fig:int_ushaped5heatmaps8} presents in heat-map form the dependence of
the next-step social cost $C(n^A_{t+1})$ on the parameters $\delta$
and $\gamma$ of the
interval signalling scheme.
Clearly, the optimal $(\delta,\gamma)$-values are a non-trivial region
bounded away from $0$ and $\infty$.
The simulated next-step social cost $C(n^A_{t+1})$ on
Figure~\ref{fig:int_ushaped5heatmaps8_0} coincides with the
expected value of $C(n^A_{t+1})$ computed using Theorem~\ref{pro:3} on
Figure~\ref{fig:int_ushaped5heatmaps8_2}.

\begin{remark}[Concentration of $C(n^A_t)/C(n^*)$]
 In the case of $(\delta,\gamma)$-interval signalling, 
the social cost
  ratio $C(n^A_t)/C(n^*)$ is concentrated around the mean in a similar
  fashion to Theorem~\ref{thm:1}.
\end{remark}

\subsection{Long-Run Behaviour}

In this section, we study empirically the long-run behaviour of the state
process $n_t^A/N$ under $(\delta,\gamma)$-interval signalling.
Figure~\ref{fig:int_ushaped5simulations} shows the simulated the
time evolution of $n_t^A/N$.
As in
    Figure~\ref{fig:ushaped5simulations_states}, 
    a severe flapping effect is exhibited at a low value of $\delta$, e.g., $\delta = 0.5$. For higher values of $\delta$,
   there are hints of convergence.

Figure~\ref{fig:int_ushaped5longrun_30_lambda-random_gamma0.2}
shows the dependence of the time-averaged social cost $\hat C_T$ on
one parameter $\delta$, with the other parameter fixed at $\gamma=0.2$. 
Observe that the dependence is similar to that exhibited in the case
of $\sigma$-scalar signalling (cf.~\ref{fig:e}); moreover, at the
optimal value of the parameter $\delta$, the social cost -- although
random -- is close to the theoretical minimum (cf.~\ref{fig:ushaped5settings_costs}).



\begin{figure}[t]
  \begin{subfigure}[t]{0.5\textwidth}
    \includegraphics[clip=true,width=0.99\textwidth]{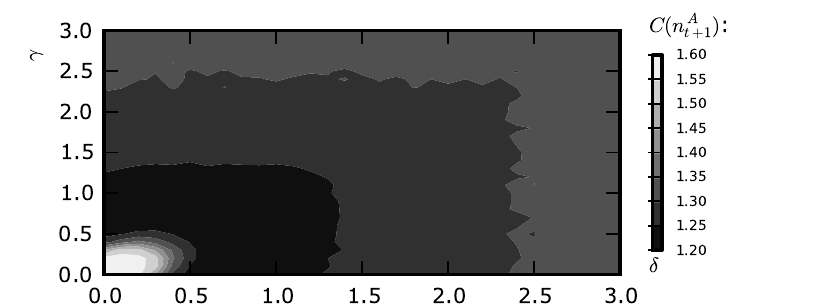}
    \caption{Simulated next-step social cost $C(n^A_{t+1})$ versus
      $\delta$ and $\gamma$.}
    \label{fig:int_ushaped5heatmaps8_0}
  \end{subfigure}
  \begin{subfigure}[t]{0.5\textwidth}
    \includegraphics[clip=true,width=0.99\textwidth]{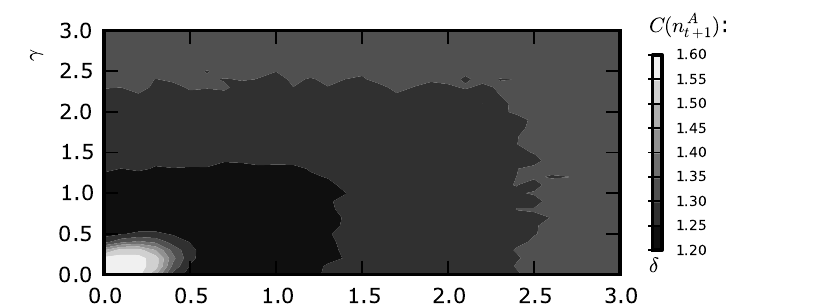}
    \caption{Expected $C(n^A_{t+1})$ computed using Theorem~\ref{pro:3} as a
      function of $\delta$ and $\gamma$}
    \label{fig:int_ushaped5heatmaps8_2}
  \end{subfigure}
  \caption{$(\delta, \gamma)$-interval signalling, cost functions of
    Figure~\ref{fig:ushaped5settings_funs}, and heterogeneous
    population with $N=40$ and $\mu$ uniform over $\{1/N,\ldots,1\}$.
  }
  \label{fig:int_ushaped5heatmaps8}
\end{figure}

\begin{figure}[t]
 \begin{subfigure}[t]{0.5\textwidth}
\includegraphics[clip=true,width=0.99\textwidth]{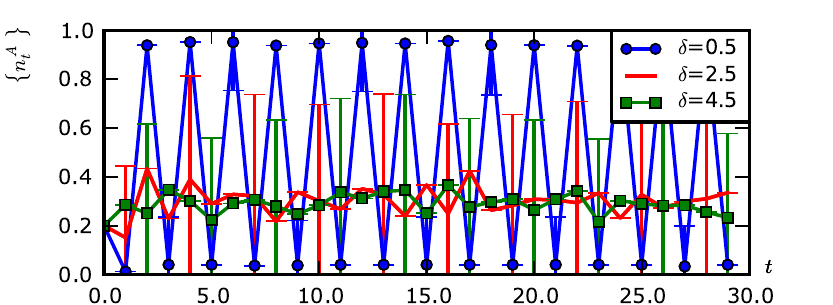}
\caption{The processes $\{n^A_t/N\}$ over time for $N = 40$ agents,
  $\delta = 0.1$, and $\gamma = 0.2$.}
\label{fig:int_ushaped5simulations}
\end{subfigure}
\begin{subfigure}[t]{0.5\textwidth}
  \includegraphics[clip=true,width=0.99\textwidth]{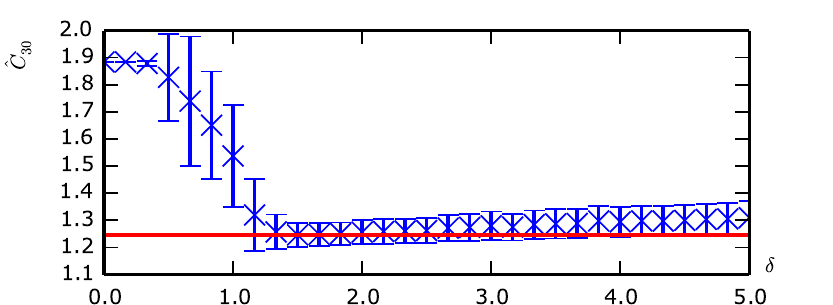}
\caption{Time-averaged social cost $\hat C_{30}$
    versus $\delta$ for a fixed $\gamma = 0.2$,
uniform distribution $\mu$
    over {$[0,1]$}, and $N=40$.  
  The dashed horizontal line is
    at the optimal $\delta$.}
  \label{fig:int_ushaped5longrun_30_lambda-random_gamma0.2}
\end{subfigure}
\caption{$(\delta, \gamma)$-interval signalling on the cost functions of
    Figure~\ref{fig:ushaped5settings_funs}}
\end{figure}


\section{Conclusion}\label{sec:con}

Our analysis and simulations give quantitative guidance on how to
design signalling schemes in order to reduce the social cost.  In
particular, if we can estimate the true population mixture, e.g., using
statistical estimation techniques, then we can optimise the signalling
scheme to minimise the social cost. Although our analysis focuses on
  a congestion problem with two resources, the results can be extended
  to an arbitrary number of resources, e.g., by replacing the binomial
  distributions by a multinomial distribution in the analysis.


In contrast to signals that report past outcomes as deterministic
numbers, which correspond to singular probability distributions, we
propose random signals with non-singular distributions.  When agents
mistake deterministic signals for precise predictions of the future,
their actions lead to the cyclic flapping behaviour.  We show that agents
who receive random signals with non-singular distributions exhibit
cyclic behaviour to a lesser degree, which can improve the social
cost. 

Regarding future work, we pose several new questions that can be
studied by extending our model.  What happens when population size and
composition change over time? This can be modeled by a Markovian
sequence of random variables corresponding to agent types. How do the optimal parameter values
for $\sigma,\delta,\gamma$ depend on those changes? This can
be done by employing state-space control techniques to obtain these optimal values.
Would our results hold if every agent had a distinct cost function, so as to model the priority given to some
agents, e.g., when evacuating the young and elderly?
This requires a new notion of consistency or truthfulness that is
specialised to individual agents, as well as a new analysis.

\section*{Acknowledgement}
This work was
   supported in part by the EU FP7 project INSIGHT under grant
   318225, and in part by Science Foundation Ireland grant
   11/PI/1177.


\bibliographystyle{abbrv}
\bibliography{traffic,traffic-old}

\end{document}